\numberwithin{equation}{section}
\newtheorem{lemma}[equation]{Lemma}
\newtheorem{theorem}[equation]{Theorem}
\theoremstyle{definition}
\newtheorem{rk}[equation]{Remark}
\newcommand{\Add}{\mathsf{Add}}
\newcommand{\cD}{\mathcal D}
\newcommand{\cH}{\mathcal H}
\newcommand{\cS}{\mathcal S}
\newcommand{\cU}{\mathcal U}
\newcommand{\bZ}{\mathbb Z}
\newcommand{\IS}{\mathcal{IS}}
\newcommand{\Ext}{\mathsf{Ext}}
\newcommand{\Id}{\mathrm{Id}}
\newcommand{\Hom}{\mathsf{Hom}}
\newcommand{\HHH}{\mathsf{H}}
\renewcommand{\le}{\leqslant}
\renewcommand{\geq}{\geqslant}
\title{Bounded complexes of permutation modules}
\author{David J. Benson}
\address{Institute of Mathematics, Fraser Noble Building, Univeristy
  of Aberdeen, Aberdeen AB24 3UE, United Kingdom}
\author{Jon F. Carlson}
\address{Department of Mathematics, University of Georgia, Athens GA
  30602, USA}
\thanks{The second author was partially supported by
Simons Foundation Grant 054813-01.}
\subjclass{20J06, 20C20}
\keywords{Finite groups, permutation modules, bounded exact complex}
\begin{document}

\begin{abstract}
Let $k$ be a field of characteristic $p > 0$. For $G$ an elementary abelian 
$p$-group, there exist collections of permutation module such that if $C^*$ 
is any exact bounded complex whose terms are sums of copies of modules from the 
collection, then $C^*$ is contractible. A consequence is that if $G$ is any
finite group whose Sylow $p$-subgroups are not cyclic or quaternion,
and if $C^*$ is a  bounded exact complex such that each $C^i$ is 
direct sum of one dimensional modules and projective modules, then $C^*$ 
is contractible. 
\end{abstract}

\maketitle

\section{Introduction}

The study of complexes of permutation modules for finite groups has a long history.
They occur naturally in the study of group actions on 
CW-complexes and manifolds,
see for example 
Bredon~\cite{Bredon:1972a},  
Illman~\cite{Illman:1978a}.
Of particular interest are the complexes arising from collections of subgroups in the
work of Quillen~\cite{Quillen:1978a},
Webb~\cite{Webb:1987b,Webb:1987a} and others.
They also arise in the theory of splendid equivalences between derived
categories of blocks, in the work of Rickard~\cite{Rickard:1996a}.

In this paper we examine bounded exact complexes of
permutation modules, and find conditions which force them to be
contractible. Of course for a cyclic group $G\cong\bZ/p$ in
characteristic $p$ there exist bounded exact complexes that are not
contractible, such as the periodicity complex $0\to k \to kG \to kG \to k \to 0$, so the game
is to find conditions prohibiting examples constructed from these.

Suppose that $\cS$ is a collection of modules over the group algebra $kG$ 
of a finite group $G$ with coefficients in a field $k$ of characteristic $p >0$.
We investigate the question of what collections $\cS$ exist with the 
propery that any bounded exact complex of modules in the additive subcategory
$\Add(\cS)$ is contractible. We show that if $G$ is an elementary abelian 
$p$-group having rank at least two, then there are collections of permutation
modules that satisfy this property. In addition, for any finite group having 
$p$-rank at least two, the collection of all modules that are either projective
or have dimension one has the property. There are 
many more such collections.  Our main theorem is
Theorem~\ref{thm:main}, which gives a sufficient condition on a
collection of subgroups of an elementary abelian group for every
bounded exact complex with these stabilisers to be contractible. It is
easy to see, by inflating and inducing up the periodicity complex for a cyclic 
subquotient, that this condition is also necessary. The proof of the
main theorem involves a spectral sequence argument and some
commutative algebra. As an application of the main theorem, we show
that for a finite group of $p$-rank at least two, every bounded exact
complex of sums of projective modules and one dimensional modules is
contractible, in contrast with the cyclic case discussed above.

A recent paper by Paul Balmer and the first author is something of a 
complement to the results presented here. 
In \cite{Balmer/Benson:2020a}, it is proved that 
every module over the modular group algebra of an 
elementary abelian has a finite resolution by 
permutation modules.  By splicing together left and right resolutions
obtained in this way,
we obtain large supplies of exact complexes of permutation modules
that do not split.\medskip

\noindent
{\bf Acknowledgements.}
Both authors wish to thank Henning Krause and the University of Bielefeld 
for their hospitality and support during a visit 
when the initial ideas for this paper
were developed. The authors would also like to thank the Newton
Institute for Mathematical Sciences for support and hospitality during
the programme ``Groups, representations and applications: new
perspectives,'' when part of the work on this paper was undertaken. This work
was supported by EPSRC grant number EP/R014604/1.


\section{preliminaries} \label{sec:prelim}
Let $k$ be a field of characteristic $p$. Let $G$ be an elementary 
abelian $p$-group of rank $r$. That is, $G \cong C_p^r$  is a direct 
product of $r$ copies of the cyclic group $C_p$ of order $p$.   

Recall that if $p=2$, then the cohomology ring of $G$, 
$\HHH^*(G,k) \cong \Ext^*_{kG}(k,k)$ is a polynomial 
ring 
\[ 
\HHH^*(G,k) = k[x_1, \dots, x_r] 
\]
with every $x_i$ having degree one.
If $p$ is odd, then
\[ 
\HHH^*(G,k)=k[x_1, \dots, x_r] \otimes \Lambda(u_1, \dots, u_r) 
\]
where $\Lambda$ is the exterior algebra generated by elements 
$u_i$ is degree one, and every $x_i$ has degree $2.$

If $M$ is a $kG$-module, then $\HHH^*(G,M)$ is a finitely generated 
module over $\HHH^*(G,k)$. For $E$ a subgroup of $G$, let 
$kG/E$ denote the permutation module on the cosets of $E$ in $G$. 
By Frobenius Reciprocity, or the Eckmann--Shapiro Lemma, we have an
isomorphism of $\HHH^*(G,k)$-modules
$\HHH^*(G, kG/E) \cong \HHH^*(E,k)$, with the action of 
$\HHH^*(G,k)$ on $\HHH^*(E,k)$
given by the restriction map $\HHH^*(G,k) \to \HHH^*(E,k)$.

For the proof of the main theorm of the paper we require some  
technical facts. The first is easily verified by the reader. 

\begin{lemma} \label{lem:ann}
Let $R$ be a 
 ring and $u \in R$. 
\begin{enumerate}
\item If $N \subseteq M$ are $R$-modules, and $uN = 0 = u(M/N)$,
then $u^2M = 0$. 
\item If $M$ and $N$ are $R$-modules, $uM = 0$, and $u$ is regular 
on $N$, then $\Hom_R(M, N) = 0$.
\end{enumerate}
\end{lemma}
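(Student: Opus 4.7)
For part (i), the plan is the one-line direct verification: take any $m\in M$, use the hypothesis $u(M/N)=0$ to conclude that the coset $um+N$ is zero, i.e.\ $um\in N$, and then use $uN=0$ to conclude $u(um)=u^2m=0$. Since $m$ was arbitrary, $u^2M=0$. No obstacle here; it is a two-step diagram-chase at the level of elements.

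For part (ii), the plan is equally direct: let $f\colon M\to N$ be an $R$-module homomorphism and let $m\in M$. Then
\[
u\cdot f(m)=f(um)=f(0)=0,
\]
using $uM=0$ in the middle equality. Since $u$ is regular on $N$, multiplication by $u$ is injective on $N$, hence $f(m)=0$. As $m$ was arbitrary, $f=0$, proving $\Hom_R(M,N)=0$.

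Neither part presents a genuine obstacle; both are standard element-level manipulations. The only thing worth flagging is the use of the correct notion of ``regular'' in (ii): it means that $u$ acts injectively on $N$, so the vanishing of $u\cdot f(m)$ in $N$ really does force $f(m)=0$. Since the statement is essentially the content of the definitions of the terms ``annihilates'' and ``regular element,'' no machinery beyond that is required.
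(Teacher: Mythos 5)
Your proof is correct, and since the paper explicitly leaves this lemma to the reader as ``easily verified,'' your element-level verification is exactly the intended argument. Nothing to add.
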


\begin{theorem}\label{th:hom-ext}
Let $G$ be an elementary abelian $p$-group of rank $r$. 
Let $s\le r$, and let $M$, $N$
be finite direct sums of copies of $\HHH^*(G,k)$-modules
of the form $\HHH^*(E,k)$ where the $E$'s are subgroups of rank $s$.
Let $M_0 \subseteq M$, $N_0 \subseteq N$ be 
graded $\HHH^*(G,k)$-submodules and let
$X=M/M_0$. Suppose that there are elements $u$ and $v$ in
$\HHH^*(G,k)$ such that $u$ and $v$ annihilate $X$ but $u, v$ is
a regular sequence on $N$.
Then
\begin{enumerate}
\item $\Hom_{\HHH^*(G,k)}(X,N)=0$,
\item $\Ext^1_{\HHH^*(G,k)}(X,N)=0$,
\item
$\Hom_{\HHH^*(G,k)}(M,N) \to \Hom_{\HHH^*(G,k)}(M_0,N)$
is an isomorphism,
\item
$\Hom_{\HHH^*(G,k)}(M_0,N_0) \to \Hom_{\HHH^*(G,k)}(M_0,N)$
is injective, and
\item
$\Hom_{\HHH^*(G,k)}(M_0,N_0)$ is zero
in negative degrees, that is, there are no homomorphisms that lower 
degree.
\end{enumerate}
\end{theorem}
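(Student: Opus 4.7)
The plan is to prove the five claims in the order stated, building up from the annihilation and regularity hypotheses on $u,v$ via Lemma~\ref{lem:ann} and the long exact sequence of $\Ext$. The genuine input is a Koszul-style argument for part (ii); the remaining statements will then follow formally from the left exactness of $\Hom$ together with the fact that each $\HHH^*(E,k)$ is concentrated in non-negative degrees.

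For (i), Lemma~\ref{lem:ann}(ii) applies directly with the element $u$: since $u$ annihilates $X$ and is regular on $N$, we conclude $\Hom_{\HHH^*(G,k)}(X,N)=0$.

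The crux is (ii). Here I would use the general fact that any element of $\HHH^*(G,k)$ annihilating $X$ acts as zero on $\Ext^i_{\HHH^*(G,k)}(X,-)$, since the two natural $\HHH^*(G,k)$-actions on $\Ext$ agree. Starting from the short exact sequence $0\to N\xrightarrow{u} N\to N/uN\to 0$ and applying $\Hom_{\HHH^*(G,k)}(X,-)$, the vanishing of multiplication by $u$ on $\Ext^1_{\HHH^*(G,k)}(X,N)$ produces a surjection
\[
\Hom_{\HHH^*(G,k)}(X,N/uN)\twoheadrightarrow \Ext^1_{\HHH^*(G,k)}(X,N).
\]
Because $u,v$ is a regular sequence on $N$, $v$ is regular on $N/uN$; combined with $vX=0$, Lemma~\ref{lem:ann}(ii) kills the left-hand side, forcing $\Ext^1_{\HHH^*(G,k)}(X,N)=0$.

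For the remaining statements I would apply $\Hom_{\HHH^*(G,k)}(-,N)$ to the defining short exact sequence $0\to M_0\to M\to X\to 0$; parts (i) and (ii) kill the outer terms and deliver (iii). Statement (iv) is just left exactness of $\Hom_{\HHH^*(G,k)}(M_0,-)$ applied to $N_0\hookrightarrow N$. For (v), combining (iv) with the isomorphism of (iii) embeds $\Hom_{\HHH^*(G,k)}(M_0,N_0)$ into $\Hom_{\HHH^*(G,k)}(M,N)$; since each summand $\HHH^*(E,k)$ of $M$ is a cyclic $\HHH^*(G,k)$-module generated by its degree zero identity, and each summand $\HHH^*(E',k)$ of $N$ is concentrated in non-negative degrees, $\Hom_{\HHH^*(G,k)}(M,N)$ vanishes in negative degrees. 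The only real subtlety is the vanishing of the $u$-action on $\Ext^*(X,-)$ used in (ii); everything else is essentially formal.
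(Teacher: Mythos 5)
Your proof is correct and follows essentially the same route as the paper: the depth argument via the long exact sequence for $0\to N\xrightarrow{u}N\to N/uN\to 0$ (with the annihilation of $\Ext^1$ by $u$ and regularity of $v$ on $N/uN$), and then (iii)--(v) deduced formally from the sequence $0\to M_0\to M\to X\to 0$ and left exactness. Your explicit justification of the negative-degree vanishing of $\Hom_{\HHH^*(G,k)}(M,N)$ (each $\HHH^*(E,k)$ being cyclic, generated in degree zero, with $N$ concentrated in non-negative degrees) merely spells out what the paper leaves implicit.
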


\begin{proof}
To prove (i) and (ii), we use a standard depth argument from commutative
algebra. Choose elements $u,v\in \HHH^*(G,k)$ such that $u$ and
$v$ annihilate $X$, but form a regular sequence on $N$. Consider the
exact sequence
\[
\xymatrix{
0\ar[r] &  N \ar[r]^u & N \ar[r] & N/uN \ar[r] & 0. 
}
\]
From this we get a long exact sequence
\[ 
\xymatrix{
0 \ar[r] & \Hom_{\HHH^*(G,k)}(X,N) \ar[r]^u & \Hom_{\HHH^*(G,k)}(X,N) \ar[r] &
\Hom_{\HHH^*(G,k)}(X,N/uN) 
}
\]
\[
\xymatrix{
{} \ar[r] &  \Ext^1_{\HHH^*(G,k)}(X,N) \ar[r]^u &
\Ext^1_{\HHH^*(G,k)}(X,N) \ar[r] & \dots
}
\]
Since $u$ annihilates $X$, it annihilates both $\Hom_{\HHH^*(G,k)}(X,N)$
and $\Ext^1_{\HHH^*(G,k)}(X,N)$. Thus $\Hom_{\HHH^*(G,k)}(X,N) = 0$.
By a similar agument, because $v$ is regular on $N/uN$, we
have that $\Hom_{\HHH^*(G,k)}(X,N/uN)=0$. From this it follows that 
$\Ext^1_{\HHH^*(G,k)}(X,N)=0$ as asserted. 

From the exact sequence
\[
\xymatrix{
0 \ar[r] &  M_0 \ar[r] &  M  \ar[r] &  X \ar[r] &  0 
}
\]
we obtain an exact sequence
\[ 
\xymatrix@-1.2pc{
0\ar[r] &  \Hom_{\HHH^*(G,k)}(X,N) \ar[r] &  \Hom_{\HHH^*(G,k)}(M,N) \ar[r] &
\Hom_{\HHH^*(G,k)}(M_0,N) \ar[r] &  \Ext^1_{\HHH^*(G,k)}(X,N) \ar[r] & \dots 
}
\]
Thus (iii) follows from (i) and (ii).

Part (iv) follows from the fact that the map $N_0\to N$ is injective.
To prove part (v), we note that $\Hom_{\HHH^*(G,k)}(M,N)$ is zero in
negative degrees, so by (iii) the same is true of
$\Hom_{\HHH^*(G,k)}(M_0,N)$, and then by (iv) the same is true of
$\Hom_{\HHH^*(G,k)}(M_0,N_0)$.
\end{proof}


\section{The main theorem}  \label{sec:main}
Our objective in this section is to prove the main theorem of the paper. 
If $\cS$ is a collection of finitely generated $kG$-modules, then 
$\Add(\cS)$ is the full additive subcategory of the module category 
consisting of all $kG$-modules that are isomorphic to finite direct
sums of copies of objects in $\cS$. 

\begin{theorem} \label{thm:main}
Let $G$ be an elementary abelian $p$-group and $k$ a field of
characteristic $p$. Suppose that $\cH$ is a collection of 
subgroups of $G$ with the property that there is no pair $E$, $F$
of elements in $\cH$ such that
$E \subseteq F$ with index $p$. If
\[
\xymatrix{ 
0 \ar[r]  & C^0 \ar[r] &  C^1 \ar[r] &  \cdots \ar[r] & C^\ell \ar[r] & 0
}
\]
is a bounded exact sequence of $kG$-modules in $\Add(\{ kG/E \mid E
\in \cH\})$ then $C^*$ is contractible.
\end{theorem}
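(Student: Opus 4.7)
The plan is to prove contractibility by induction on the length $\ell$ of the complex. For $\ell\le 1$ the statement is trivial. For the inductive step, it suffices to produce a $kG$-linear retraction of $d^0:C^0\hra C^1$: once this is done, $C^0$ splits off as a contractible summand and we are left with an exact complex
\[
0\to C^1/d^0(C^0)\to C^2\to\cdots\to C^\ell\to 0
\]
of length $\ell-1$. For the induction to close, the truncated complex must still lie in $\Add(\{kG/E:E\in\cH\})$; this follows from Krull--Schmidt, since each $kG/E$ is indecomposable ($\End_{kG}(kG/E)\cong k[G/E]$ is local, as $G/E$ is a $p$-group over a field of characteristic $p$), so the additive category is closed under direct summands.

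To produce the retraction I would pass to cohomology. Applying $\HHH^*(G,-)$ to $C^*$ and using Shapiro's lemma gives a complex
\[
T^p \;=\; \HHH^*(G,C^p) \;\cong\; \bigoplus_j \HHH^*(E_{pj},k), \qquad E_{pj}\in\cH,
\]
of graded $\HHH^*(G,k)$-modules, each of exactly the form controlled by Theorem~\ref{th:hom-ext}. Because $C^*$ is exact and bounded, the hypercohomology spectral sequence
\[
E_1^{p,q}=\HHH^q(G,C^p)\;\Longrightarrow\;\HHH^{p+q}(G,C^*)=0
\]
abuts to zero. The intention is to combine this abutment with parts (i)--(iv) of Theorem~\ref{th:hom-ext} to split the short exact sequences of $\HHH^*(G,k)$-modules built from images and cokernels of the $d_1$ differential, thereby identifying $\HHH^*(G,C^0)$ as a direct summand of $\HHH^*(G,C^1)$ at the cohomological level.

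The decisive use of the rank hypothesis is to verify the regular-sequence assumption of Theorem~\ref{th:hom-ext}. Via the duality between subgroups of $G$ and their annihilator ideals in $\HHH^*(G,k)$, the condition that no $E\subseteq F$ in $\cH$ has index $p$ translates into the numerical inequality $\rank\langle E,F\rangle\ge\rank E+2$ for the comparisons that arise; this is precisely the slack required to find a two-term regular sequence $u,v$ sitting in the common annihilator of one module that restricts to a regular sequence on the other.

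The main obstacle, as I see it, is descending a splitting constructed at the level of graded $\HHH^*(G,k)$-modules to an actual $kG$-linear retraction of $d^0$, since the functor $\HHH^*(G,-)$ is very far from fully faithful on permutation modules. This is where Theorem~\ref{th:hom-ext}(v) --- the absence of degree-lowering homomorphisms between the relevant submodules --- should do the decisive work: it rigidifies the class of admissible lifts enough that the cohomological splitting is forced to come from a genuine $kG$-module morphism. Executing this descent is the core of the proof.
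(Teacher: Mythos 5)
Your overall strategy is different from the paper's, and the crucial step is unexecuted.

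The paper does \emph{not} construct a retraction of $d^0$ and then induct on the length. Instead, it assumes $C^*$ has no contractible direct summand and derives a contradiction by showing that the spectral sequence $E_1^{i,j} = \HHH^j(G,C^i) \Rightarrow 0$ cannot in fact converge to zero. The mechanism is an induction on the page $n$: after splitting $C^i = D^i \oplus U^i$ according to whether the stabiliser has maximal rank $s$ or not, one maintains at each page a short exact sequence of complexes $0 \to \cU_n^{*,*} \to E_n^{*,*} \to \cD_n^{*,*} \to 0$, where $\cD_n^{*,*}$ is an $\HHH^*(G,k)$-submodule of $\HHH^*(G,D^*)$ whose quotient is annihilated by a pair of regular elements. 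Part (v) of Theorem~\ref{th:hom-ext} is invoked to show that the composite $\vartheta_{n+1}d_{n+1}\colon\cD_{n+1}^{*,*}\to\cD_{n+1}^{*,*}$, which strictly lowers the internal ($j$-)degree, must vanish; this is what lets the piece $\cD_n$ survive to the next page. After finitely many steps one finds $E_{\ell+2}^{*,*}\ne 0$, contradicting exactness. Nothing is ever descended from cohomology back to a $kG$-module map.

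This identifies the genuine gap in your proposal. You correctly recognise that the functor $\HHH^*(G,-)$ is far from faithful on permutation modules, so a cohomological splitting cannot be imported directly; you then assert that Theorem~\ref{th:hom-ext}(v) ``rigidifies the class of admissible lifts enough'' to force a genuine $kG$-retraction, but you give no argument, and this is not how (v) is used in the paper. Indeed there is no descent step in the actual proof: the argument stays entirely inside graded $\HHH^*(G,k)$-modules and never produces a chain contraction; contractibility falls out by contradiction once all contractible summands have been removed. Your remarks about translating the index-$p$ hypothesis into an inequality $\rank\langle E,F\rangle\ge\rank E+2$ are also not quite what is needed. The actual role of the hypothesis (via Lemma~\ref{le:u,v} and prime avoidance) is to guarantee homogeneous $u,v$ that form a regular sequence on $\HHH^*(E',k)$ for every maximal-rank $E'\in\cH$ while restricting to zero on $\HHH^*(E'',k)$ for every $E''\in\cH$ of smaller rank; the index-$p$ exclusion is exactly what prevents the relevant prime ideals from nesting and makes the avoidance possible. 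Your reduction of the $\Add$-closure to Krull--Schmidt and the locality of $\End_{kG}(kG/E)\cong k[G/E]$ is fine, but that is a minor point compared to the unproved central claim. As written, the proof is incomplete precisely where you yourself flag it as ``the core of the proof.''
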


Before beginning the proof we note the following. 

\begin{lemma}\label{le:u,v}
Suppose that $\cH$ is a collection of subgroups of $G$ as in 
Theorem \ref{thm:main}. Let $s$ be the maximum rank of an 
element of $\cH$. Let $\cH'$ be the subset of subgroups in $\cH$ of
rank $s$ and let $\cH''=\cH\setminus\cH'$.
If $s > 1$ then there exist homogeneous elements $u$ and $v$ of $H^*(G,k)$
such that for every $E^\prime\in\cH^\prime$,
$u$ and $v$ form a regular sequence on 
$\HHH^*(E^\prime,k)$, and $u$ and $v$ restrict to zero on every
$\HHH^*(E^{\prime\prime},k)$ for all $E^{\prime\prime} \in \cH^{\prime\prime}$.
\end{lemma}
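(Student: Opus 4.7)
The plan is to translate the claim into the geometry of linear subspaces of $\bA^r_k$ and then apply graded prime avoidance twice. Write $R_0=k[x_1,\dots,x_r]\subseteq \HHH^*(G,k)$ for the polynomial subring, which is all of $\HHH^*(G,k)$ when $p=2$ and the even-degree polynomial part when $p$ is odd; in either case $\HHH^*(G,k)$ is free of finite rank over $R_0$ via the exterior generators. Each subgroup $E\le G$ of rank $t$ corresponds to a linear subspace $V_E\subseteq V_G=\Spec R_0=\bA^r_k$ of dimension $t$, and the restriction map sends $R_0$ onto the analogous polynomial subring $k[V_E]\subseteq \HHH^*(E,k)$, with kernel the homogeneous prime $I_E$ cutting out $V_E$. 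Since $\HHH^*(E,k)$ is free over $k[V_E]$, an element of $R_0$ restricts to $0$ on $\HHH^*(E,k)$ iff it lies in $I_E$, and a pair in $R_0$ is a regular sequence on $\HHH^*(E,k)$ iff its image is a regular sequence in the polynomial ring $k[V_E]$. So it suffices to produce homogeneous $u,v\in R_0$.

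Put $J=\bigcap_{E''\in\cH''} I_{E''}$. First I would construct $u$. For each $E'\in\cH'$, $\dim V_{E'}=s>\dim V_{E''}$ gives $V_{E'}\not\subseteq V_{E''}$, and irreducibility of $V_{E'}$ together with the finiteness of $\cH''$ then forces $V_{E'}\not\subseteq \bigcup V_{E''}=V(J)$, i.e.\ $J\not\subseteq I_{E'}$. Graded prime avoidance applied to $J$ against the finite family $\{I_{E'}:E'\in\cH'\}$ therefore produces a homogeneous $u\in J$ with $u\notin I_{E'}$ for every $E'$. By construction $u$ vanishes on each $V_{E''}$ and restricts to a nonzero homogeneous element of $k[V_{E'}]$ for every $E'\in\cH'$.

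Next I would construct $v$. For each $E'\in\cH'$ the hypersurface $V(\res_{E'}(u))\subseteq V_{E'}$ has finitely many irreducible components $W$, each of dimension $s-1$; let $\mathfrak p_W\subseteq R_0$ denote the homogeneous prime defining such a $W$. The crux of the argument is that $J\not\subseteq\mathfrak p_W$ for every such $W$: otherwise $W\subseteq V(J)$, so by irreducibility $W\subseteq V_{E''}$ for some $E''\in\cH''$, and the dimension count $s-1=\dim W\le \dim V_{E''}<s$ forces $\dim V_{E''}=s-1$ and then $W=V_{E''}$. But then $V_{E''}\subseteq V_{E'}$ is a hyperplane, i.e.\ $E''\subseteq E'$ has index $p$, violating the hypothesis on $\cH$. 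A second application of graded prime avoidance, to $J$ against the finite set $\{\mathfrak p_W\}$, now produces a homogeneous $v\in J$ with $v\notin\mathfrak p_W$ for every such $W$.

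To finish, I would observe that $u,v\in J\subseteq I_{E''}$ for each $E''\in\cH''$, so both restrict to zero on $\HHH^*(E'',k)$; and that for each $E'\in\cH'$ the images $\res_{E'}(u),\res_{E'}(v)$ are nonzero homogeneous elements of the UFD $k[V_{E'}]$ sharing no common irreducible factor, so they form a regular sequence there and therefore also on $\HHH^*(E',k)$ by freeness. The only substantive obstacle is the dimension comparison in the third paragraph that turns the "no index-$p$ containment" hypothesis into the non-containment $J\not\subseteq\mathfrak p_W$; everything else is routine packaging around graded prime avoidance, which is what allows the argument to work uniformly in $k$, finite or infinite.
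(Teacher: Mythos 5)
Your argument is correct and supplies exactly the content that the paper compresses into the single sentence ``This is an easy exercise in prime avoidance, using the hypothesis on $\cH$ in the theorem.'' You reduce to the polynomial subring $R_0$ via freeness (which also handles the exterior part when $p$ is odd), translate the constraints into the lattice of linear subvarieties $V_E = E\otimes k\subseteq\bA^r_k$, and then apply graded prime avoidance twice: once against the primes $I_{E'}$ to obtain $u\in J=\bigcap I_{E''}$ with $\res_{E'}(u)\neq 0$, and once against the primes $\mathfrak p_W$ of the codimension-one components of $V(\res_{E'}(u))$ inside $V_{E'}$ to obtain $v$. The one genuinely substantive step is showing $J\not\subseteq\mathfrak p_W$, and your dimension count correctly identifies this as the precise place where the ``no index-$p$ containment'' hypothesis enters: $W\subseteq V_{E''}$ with $\dim W = s-1\geq\dim V_{E''}$ forces $W=V_{E''}$, hence $V_{E''}\subseteq V_{E'}$, hence $E''\subsetneq E'$ of index $p$, using that $E\mapsto E\otimes k$ is an order-embedding from subgroups of $G$ to $k$-subspaces of $G\otimes k$. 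The final coprimality-in-a-UFD step for regularity, together with the freeness of $\HHH^*(E',k)$ over $k[V_{E'}]$, cleanly closes the argument, and invoking \emph{graded} prime avoidance is the right move to keep things valid over finite as well as infinite $k$.

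Since the paper provides no actual proof to compare against, I cannot say your route differs from theirs, but it is the natural prime-avoidance argument they evidently have in mind, carried out carefully. One very minor point you could make explicit: when $\cH''=\varnothing$ the ideal $J$ is the unit ideal and the first prime-avoidance step should be phrased as choosing a positive-degree homogeneous $u$ outside $\bigcup_{E'}I_{E'}$ (possible since each $I_{E'}$ is a linear prime, not the irrelevant ideal), so that $\res_{E'}(u)$ remains a nonunit and the hypersurface components $W$ genuinely exist in the second step.
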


\begin{proof}
This is an easy exercise in
prime avoidance, using the hypothesis on $\cH$ in the theorem.
\end{proof}

We are now prepared to prove our theorem. 

\begin{proof}[Proof of Theoerm \ref{thm:main}]
Suppose that $C^*$ is an exact complex of permutation
modules as in the statement of Theorem \ref{thm:main}. 
Without loss of generality, we may assume that $C^*$ does
not have a nontrivial direct summand that is contractible.
Let 
\[
\xymatrix{
\dots \ar[r] & P_2 \ar[r] & P_1 \ar[r] & P_0 \ar[r] & k \ar[r] & 0
}
\]
be a projective resolution of $k$, the trivial $kG$-module. 
We consider the two spectral sequences whose $E_0$-term
is the double complex
\[ 
E_0^{i,j} = \Hom_{kG}(P_j,C^i). 
\]
If we first take the differential that comes from the complex 
$C^*$, then we get that $E_1^{*,*}$ is identically zero. This is because
$P_j$ is projective and the complex $C^*$ is exact. 

It follows that the spectral sequence obtained by first starting 
with the differential on $P_*$ converges to zero. Indeed, it converges
to zero after a finite number of steps, since there are only a finite
number of columns. The $E_1$ term has the form
\[ 
\xymatrix@-1pc{
E_1^{i,j}= \HHH^j(G,C^i) \quad \ar@{=>}[r] & \qquad 0, 
}
\]
and all differentials on this and subsequent pages 
are homomorphisms of $\HHH^*(G,k)$-modules.
Each $C^i$ is a direct sum of permutation modules on subgroups
$E \in \cH$. The corresponding summand of $\HHH^*(G,C^i)$ is
$\HHH^*(G,kG/E) \cong \HHH^*(E,k)$. Note that the larger the subroup the
smaller is the dimension of the permutation module, and the larger is
the Krull dimension of its cohomology.

Let $s$ be the maximum $p$-rank of any element of $\cH$. 
Let $\cH'$ be the set of subgroups in $\cH$ that have 
rank $s$, and let $\cH''=\cH\setminus\cH'$.
We decompose each $C^i$ as $D^i\oplus U^i$, where
$D^i$ is a direct sum of permutation 
modules $k(G/E)$ with $E \in \cH^\prime$ having rank $s$,
and where $U^i$ is a direct sum of permutation
modules on cosets of subgroups in $\cH''$. By Lemma~\ref{le:u,v},
there are homogeneous  elements $u, v\in \HHH^*(G,k)$ 
which form a regular sequence
on every $\HHH^*(G,D^i)$ but annihilate all $\HHH^*(G,U^i)$.
In particular, since $u$ is regular on $\HHH^*(G,D^i)$
but annihilates $\HHH^*(G,U^i)$, 
there are no nonzero $\HHH^*(G,k)$-module homomorphisms
from $\HHH^*(G,U^i)$ to $\HHH^*(G,D^{i+1})$. Hence, 
$d_1(\HHH^*(G, U^*)) \subseteq \HHH^*(G, U^*)$ and we obtain a short
exact sequence of complexes
\[ 
\xymatrix{
0 \ar[r] & \HHH^*(G,U^*) \ar[r] & E_1^{*,*} \ar[r]^{\vartheta_1 \quad} 
& \HHH^*(G,D^*) \ar[r] & 0. 
}
\]

Our aim is to prove that if $C^*$ is not a contractible complex, then 
the above spectral sequence cannot converge to zero, thus giving us a 
contradiction. Again, we are assuming that $C^*$ has no contractible 
direct summands. We proceed by induction on the pages of the spectral 
sequence. The induction statement is the following.\medskip

\noindent
{\bf Induction Statement for page $n$.}\quad
There exist homogeneous elements $u_n, v_n$ in $\HHH^*(G,k)$ and, 
for every $i,j$, there is a $k$-subspace 
$\cU_n^{i,j} \subseteq E_n^{i,j}$ 
such that the following hold. 
\begin{itemize}
\item[$\IS(n,1)$]: \quad  $\cU_n^{i,*}$ is an $\HHH^*(G,k)$-submodule of
$E_n^{i,*}$. Let $\cD_n^{i,j} = E_n^{i,j}/\cU_n^{i,j}$.
\item[$\IS(n,2)$]: \quad $u_n, v_n$ form a regular sequence on $\HHH^*(G,D^*)$. 
\item[$\IS(n,3)$]: \quad $u_n, v_n$ annihilate $\cU_n^{*,*}$.
\item[$\IS(n,4)$]: \quad There are natural inclusions $\cD_n^{i,j} \subseteq  
\cD_{n-1}^{i,j} \subseteq \dots \subseteq \cD_1^{i,j} = \HHH^*(G,D^*)$ 
as modules over $\HHH^*(G,k)$. Moreover, $\HHH^*(G,D^*)/\cD_n^{*,*}$ 
is annihilated by both $u_n$ and $v_n$. 
\item[$\IS(n,5)$]: \quad  $\cU_n^{*,*}$ is a subcomplex of $(E_n^{*,*},d_n)$ so that 
\[
\xymatrix{ 
0 \ar[r] & \cU_n^{*,*} \ar[r] & E_n^{*,*} \ar[r]^{\vartheta_n} &
\cD_n^{*,*} \ar[r] & 0
}
\]
is an exact sequence of complexes of $\HHH^*(G,k)$-modules.
\item[$\IS(n,6)$]: \quad  $d_n(E_n^{i,j})  \subseteq \cU_n^{i+n,i-n+1}$ 
for all $i$ and $j$. 
\end{itemize} 
\vskip.15in

We begin the induction with $n=1$. Let $\cU_1^{i,j} = \HHH^j(G, U_i)$,
$\cD_1^{i,j} = \HHH^j(G, D_i)$, $u_1 = u,$ $v_1 = v$. Then conditions
$\IS(1,1)$ and $\IS(1,5)$ have already been proved. 
Conditions $\IS(1,2)$ and $\IS(1,3)$ 
are true by the choice of $u$ and $v$. Condition $\IS(1,4)$ is 
obvious. The only remaining task is to verify $\IS(1,6)$ which is easily
seen to be equivalent to the condition that the induced map 
\[
\xymatrix{
\widehat{d}_1\colon \cD_1^{i,*} \cong \HHH^*(G, D^i) \ar[r] & \cD_1^{i+1,*} 
\cong \HHH^*(G, D^{i+1})
}
\]
is the zero map for all $i = 0, \dots, \ell-1$. This is a degree zero map of 
$\HHH^*(G,k)$-modules induced by the coboundary map on $C^*$. But note that 
if $E, F \in \cH^\prime$ and $E \ne F$, then there are no 
nonzero homomorphisms from $\HHH^*(G,k(G/E))$ to $\HHH^*(G,k(G/F))$.
Consequently, the only way that $\widehat{d}_1$ could be nonzero 
is if, for some $i$, $C^i$ and $C^{i+1}$ both had direct summands 
isomorphic to $k(G/E)$ for some $E \in \cH^\prime$,
 and the  composition of the injection
of one followed by the coboundary followed by the projection onto the
other is an isomorpism. However, such a situation does not occur because it 
violates our assumption that the complex  
$C^*$ has no contractible direct summands. 
Thus, the induction statement is true in the case that $n=1$. 

Now we assume for some $n \geq 1$ that the Induction Statement for 
page $n$ is true. We need to show that it holds also for page $n+1$. 
Let $u_{n+1} = u_n^2$
and $v_{n+1} = v_n^2$. Note that $\IS(n+1,2)$ is automatic from $\IS(n,2)$. 
Recall that $E_{n+1}^{i,j}$ is the quotient of $K^{i,j}$ which 
is the kernel of $d_n$ on $E_{n}^{i,j}$ by the submodule 
$d_n(E_{n}^{i-n,j+n-1})$.  Let $\cD_{n+1}^{i,j} = \vartheta_n(K^{i,j})$, the 
image in $\cD_n^{i,j}$ of the kernel of $d_n$ on $E_n^{i,j}$. 
By $\IS(n,6)$, we have that 
$\vartheta_n(d_n(E_{n}^{i-n,j+n-1})) = 0$. Hence there is a well 
defined homomorphism $\vartheta_{n+1}$, and an exact sequence 
\[ 
\xymatrix{
0 \ar[r] & \cU_{n+1}^{*,*} \ar[r] & E_{n+1}^{*,*} \ar[r]^{\vartheta_{n+1}} &
\cD_{n+1}^{*,*} \ar[r] & 0.
}
\]
That is, we \emph{defined} $\cU_{n+1}^{*,*}$ to be the kernel 
$\vartheta_{n+1}$, which is the map induced by $\vartheta_n$.

Because $\vartheta_n$ is an 
$\HHH^*(G,k)$-homomorphism and both the kernel and image of $d_n$ are
$\HHH^*(G,k)$-submodules, we have that $\vartheta_{n+1}$ is also 
an $\HHH^*(G,k)$-homomorphism. This proves $\IS(n+1,1)$.  
Condition $\IS(n+1,3)$ is a consequence of the fact that
$\cU_{n+1}^{*,*}$ is a quotient of $\cU_{n}^{*,*}$ by a submodule 
thereof. So it is annihilated by $u_n$ and $v_n$. For condition 
$\IS(n+1,4)$, notice that if $x \in \cD_n^{i,j}$, then $x = \vartheta_n(y)$
for some $y$ in $E_n^{i,j}$. By $\IS(n,6)$ and $\IS(n,2)$, 
$d_n(y) \in \cU_n^{i+n, j-n+1}$ is annihilated by $u_n$ and $v_n$.
That is, $u_ny$ and $v_ny$ are in the kernel of $d_n$. 
So $\vartheta_n(u_ny) = u_nx$ and  $\vartheta_n(v_ny) = v_nx$ are 
in $\cD_{n+1}^{i,j}$. Thus, $\cD_n^{*,*}/\cD_{n+1}^{*,*}$ is 
annihilated by $u_n$ and $v_n$. It follows from 
$\IS(n,4)$ and Lemma \ref{lem:ann}, that $\HHH^*(G,D^*)$
is annihilated by $u_{n+1}$ and $v_{n+1}$. 

We can see that there are no nonzero homomorphisms from $\cU_{n+1}^{*,*}$
to $\cD_{n+1}^{*,*}$. The reason is that by $\IS(n+1,2)$ and $\IS(n+1,4)$,
$u_{n+1}$ is a regular element on $\cD_{n+1}^{*,*}$, while it annihilates
$\cU_{n+1}^{*,*}$ by $\IS(n+1,3)$ and Lemma \ref{lem:ann}. 
This proves $\IS(n+1,5)$.

Finally, we observe that $d_{n+1}(\cD_{n+1}^{*,*}) \subseteq 
\cU_{n+1}^{*,*}$. The reason for this is that $\cD_{n+1}^{*,*}$ is an
$\HHH^*(G,k)$-submodule of $\HHH^*(G, D^*)$, and the homomorphism
\[
\xymatrix{
\vartheta_{n+1}\, d_{n+1}\colon \cD_{n+1}^{*,*} \ar[r] & \cD_{n+1}^{*,*},
}
\]
which lowers the degrees, is the zero map by Theorem \ref{th:hom-ext}(5).
This implies that $d_{n+1}(E_{n+1}^{*,*}) \subseteq \cU_{n+1}^{*,*}$.
Consequently, Condition $\IS(n+1,6)$ holds. 

Thus, we have shown that the Induction Statement for page $n$ of the 
spectral sequence implies that of page $n+1$. Hence, the statement holds
for all pages. Because the complex $C^*$ has only $\ell+1$ nonzero terms, 
the spectral sequence has only $\ell+1$ nonzero columns, and it must stop
after $\ell+2$ steps. That is, $E_{\ell+2}^{*,*} = E_{\infty}^{*,*} = \{0\}$.
However, this is a contradiction. By $\IS(\ell+2,4)$, $\cD_{\ell+2}^{*,*}$
is an $\HHH^*(G,k)$-submodule of $\HHH^*(G, D^*)$ such that 
$\HHH^*(G, D^*)/\cD_{\ell+2}^{*,*}$ is annihilated by $u_{\ell+2}$ which is a 
regular element on $\HHH^*(G, D^*)$. Thus $\cD_{\ell+2}^{*,*}$ and also 
$E_{\ell+2}^{*,*}$ cannot be zero. This proves the theorem. 
\end{proof}

\section{An application} \label{sec:app}
We present one easy application of the main theorem. There are numerous
similar variations.  As before assume that $k$ is a field of characteristic
$p>0$. 

\begin{theorem} \label{thm:proj+k}
Suppose that $G$ is a finite group having an elementary
abelian subgroup $E$ of $p$-rank 2. Let $\cS$ be the collection consisting
of all indecomposable projective $kG$-modules and all one dimensional
$kG$-modules. If
\[
\xymatrix{
0 \ar[r]  & C^0 \ar[r]^\partial &  C^1 \ar[r] &  \cdots \ar[r] & 
C^{\ell-1} \ar[r]^{\partial_{\ell-1}} & C^\ell \ar[r] & 0
}
\]
is a bounded exact sequence of $kE$-modules in $\Add(\cS)$, then 
$C^*$ is contractible.
\end{theorem}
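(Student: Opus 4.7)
The approach is a direct reduction to Theorem \ref{thm:main} applied to the rank-$2$ elementary abelian group $E$, with the collection of subgroups $\cH=\{\{1\},E\}$. Since $E$ is elementary abelian of rank $2$, the only nontrivial inclusion in $\cH$ is $\{1\}\subset E$, of index $p^2$; this is not $p$, so the hypothesis of Theorem \ref{thm:main} is immediately verified.

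The only real calculation is to identify how the elements of $\cS$ look as $kE$-modules. Any indecomposable projective $kG$-module is projective on restriction to $kE$, and since $E$ is a $p$-group and $k$ has characteristic $p$, every projective $kE$-module is free, hence a direct sum of copies of $kE\cong kE/\{1\}$. For a one-dimensional $kG$-module $L$, the defining character $G\to k^\times$ restricts to a character $E\to k^\times$; because $E$ is a $p$-group while $k^\times$ has no nontrivial $p$-torsion (as $k$ has characteristic $p$), this restricted character is trivial and $L\da_E\cong k\cong kE/E$. Consequently every term $C^i$, regarded as a $kE$-module, lies in $\Add(\{kE/H : H\in\cH\})$.

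With these observations in place, Theorem \ref{thm:main} applies directly to the bounded exact complex $C^*$ of $kE$-modules and yields that $C^*$ is contractible. No serious obstacle arises here: the proposition is essentially a clean corollary of the main theorem, and the substantive step is simply the translation of the hypothesis on $\cS$ into the permutation-module language required by Theorem \ref{thm:main}. If anything subtle needs to be stressed, it is the rank-$2$ hypothesis on $E$, which is precisely what ensures the index-$p$ condition fails and so allows Theorem \ref{thm:main} to be invoked; rank one would fail here, consistent with the existence of the periodicity complex mentioned in the introduction.
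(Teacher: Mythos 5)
The observation that $C^i\da_E$ lies in $\Add(\{kE,\ k\})$ and that $\cH=\{\{1\},E\}$ satisfies the no--index--$p$--inclusion hypothesis is correct, and Theorem~\ref{thm:main} does then give a contracting homotopy for $C^*\da_E$ as a complex of $kE$-modules. But this is only a fragment of the theorem. Despite the typo ``sequence of $kE$-modules'' in the statement, the intended (and proved) assertion --- as the abstract, the proof, and the remark following it all make clear --- is that $C^*$ is a complex of \emph{$kG$-modules} and that it is contractible over $kG$. Your argument only shows the restricted complex splits over $kE$, which is strictly weaker and does not immediately give a $kG$-splitting.

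The paper bridges this gap in two further steps that your proposal omits. First, it reduces to the case that $G$ is a $p$-group: if $Q$ is a Sylow $p$-subgroup and $\theta$ a $kQ$-splitting of the last differential, then the averaged map $(1/|G:Q|)\sum_{x\in G/Q}x\theta x^{-1}$ is a $kG$-splitting. Second, and this is the genuinely subtle point, when $G$ is a $p$-group it must promote the $kE$-splitting $\theta\colon C^\ell\to C^{\ell-1}$ obtained from Theorem~\ref{thm:main} to a $kG$-homomorphism. This is done by writing $C^{\ell-1}=M\oplus P$ with $M$ a sum of trivial modules and $P$ projective, and observing that the component of $\theta$ into $P$ lands in the $E$-fixed points of a projective module, hence in its radical; since $C^\ell$ is a sum of trivial modules, the radical of $P$ lies in the kernel of $\partial_{\ell-1}$, so the projection $\iota_M\rho_M\theta$ still splits $\partial_{\ell-1}$ and is automatically $kG$-linear because it is a map between direct sums of trivial modules. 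Without these two steps your argument does not establish the result claimed.
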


\begin{proof}
Without loss of generality we may assume that $C^*$ has no nonzero 
direct summands other than itself. In particular, this means that 
$C^\ell$ is a sum of one dimensional modules. That is, if $C^\ell$ has a 
submodule $P$ that is projective, then $P$ is a direct summand 
of $C^\ell$ and $\partial_{\ell-1}$ followed by the projection on to 
$P$ splits. Thus, the complex $0 \to P \to P \to 0$ is a direct summand
of $C^*$, violating our assumption. 

Next, we notice that the theorem is true if it holds in the case that 
$G$ is a $p$-group. For suppose that $Q$ is a Sylow $p$-subgroup of $G$.
Assume that the restriction of $C^*$ to $Q$ is contractible. Then there 
is a $kQ$-homomorphism $\theta\colon C^{\ell} \to C^{\ell-1}$ such that 
$\partial_{\ell-1}\,\theta = \Id_{C^\ell}$. Let $\psi$ be the map 
$\psi = (1/\vert G: Q \vert)\sum_{x \in G/Q} 
x\theta x^{-1}\colon C^\ell \to C^{\ell-1}$. Then $\psi$ 
is a $kG$-homomorphism that splits 
$\partial_{\ell-1}$. 

We assume now that $G$ is a $p$-group and that each $C^i$ is a direct
sum of trivial modules and a projective module. By Theorem~\ref{thm:main},
the restriction of $C^*$ to the elementary abelian subgroup $E$ is 
contractible. Hence, there is $kE$-homomorphism $\theta\colon C^\ell \to C^{\ell-1}$
such that $\partial_{\ell-1}\,\theta = \Id_{C^\ell}$. Now write, $C^{\ell-1} = M 
\oplus P$ where $M \cong k^n$ is a sum of trivial $kG$-modules and $P$
is a projective module. Let $\rho_P\colon C^{\ell-1} \to P$ be the projection, 
and $\iota_P$ be the inclusion of $P$ into $C^{\ell-1}$.
Let $\rho_M$ and $\iota_M$ be the same for $M$ so that 
$\iota_M\,\rho_M + \iota_P\,\rho_P = \Id_{C^{\ell-1}}$.
Notice that $\partial_{\ell-1}\,\iota_P\,\rho_P\,\theta$ is the zero map. 
The reason is that the image of $\theta$ is in the 
space of fixed points of $E$
on $P$, and since $P$ is a free module,
this is a subset of the radical of $P$. Because $C^\ell$ is a sum
of trivial modules, the radical of $P$ is in the kernel of $\partial_{\ell-1}$. 
It follows that $\partial_{\ell-1}\,\iota_M\,\rho_M\,\theta = \Id_{C^\ell}$. 
That is, $\psi = \iota_M\,\rho_M\,\theta$ splits $\partial_{\ell-1}$. To 
finish the proof we only need to notice that $\psi$ is a $kG$-homomorphism,
since it is a linear map between sums of trivial modules. 
\end{proof}

\begin{rk}
The above theorem is not true for groups of $p$-rank one. 
This means groups whose Sylow $p$-subgroups are cyclic or quaternion. 
Indeed, for such a group
the trivial module is periodic, and so there is a non-contractible
exact complex beginning and ending with the trivial module,
part of a projective resolution of the trivial module,
with all intermediate modules projective.
\end{rk}

\bibliographystyle{amsplain}
\bibliography{../../repcoh.bib}

\newcommand{\noopsort}[1]{}
\providecommand{\bysame}{\leavevmode\hbox to3em{\hrulefill}\thinspace}
\providecommand{\MR}{\relax\ifhmode\unskip\space\fi MR }
\providecommand{\MRhref}[2]{%
  \href{http://www.ams.org/mathscinet-getitem?mr=#1}{#2}
}
\providecommand{\href}[2]{#2}
\begin{thebibliography}{1}

\bibitem{Balmer/Benson:2020a}
P.~Balmer and D.~J. Benson, \emph{{Resolutions by permutation modules}}, Arch.\
  Math.\ (Basel) (2020), to appear.

\bibitem{Bredon:1972a}
G.~E. Bredon, \emph{{Introduction to compact transformation groups}}, Pure and
  Applied Mathematics, vol.~46, Academic Press, New York/London, 1972.

\bibitem{Illman:1978a}
S.~Illman, \emph{{Smooth equivariant triangulations of $G$-manifolds for $G$ a
  finite group}}, Math.\ Ann. \textbf{233} (1978), 199--220.

\bibitem{Quillen:1978a}
D.~G. Quillen, \emph{{Homotopy properties of the poset of nontrivial
  $p$-subgroups of a finite group}}, Adv.\ in Math. \textbf{28} (1978),
  101--128.

\bibitem{Rickard:1996a}
J.~Rickard, \emph{{Splendid equivalences: derived categories and permutation
  modules}}, Proc.\ London Math.\ Soc. \textbf{72} (1996), 331--358.

\bibitem{Webb:1987b}
P.~J. Webb, \emph{{A local method in group cohomology}}, Comment.\ Math.\
  Helvetici \textbf{62} (1987), 135--167.

\bibitem{Webb:1987a}
\bysame, \emph{{Subgroup complexes}}, The Arcata Conference on Representations
  of Finite Groups (Arcata, Calif., 1986), Proc.\ Symp.\ Pure Math., vol.~47,
  American Math.\ Society, 1987, pp.~349--365.

\end{thebibliography}

\end{document}